\documentclass[11pt,reqno]{amsart}
\usepackage{amsmath, amsfonts, amssymb}
\usepackage[margin=3cm]{geometry}
\usepackage{mathtools}
\usepackage[dvipsnames, svgnames]{xcolor}
\usepackage[inline]{asymptote}
\usepackage{adjustbox, enumitem, tikz-cd, multirow}
\usepackage{mark-macros}
\usepackage{graphicx, subcaption}

\usepackage[pdfusetitle, pdfencoding=auto, psdextra, breaklinks=true]{hyperref}

\allowdisplaybreaks
\renewcommand{\textbf}[1]{{\bfseries\boldmath #1}}
\newcommand{\vocab}[1]{\textbf{\textcolor{BrickRed}{\boldmath #1}}}

\definecolor{mylinkcolor}{rgb}{0.0,0.0,0.7}
\definecolor{myurlcolor}{rgb}{0.0,0.0,0.7}
\hypersetup{
 colorlinks,
 urlcolor=myurlcolor,
 citecolor=myurlcolor,
 linkcolor=mylinkcolor,
 breaklinks=true
}

\usepackage{thmtools}
\usepackage{thm-restate}

\usepackage[nameinlink]{cleveref}

[name=Theorem]
[name=Question, sibling=theorem]
[name=Claim, sibling=theorem]
\declaretheorem{lemma}[name=Lemma, sibling=theorem]
\declaretheorem{proposition}[name=Proposition, sibling=theorem]
[name=Observation, sibling=theorem]
[name=Corollary, sibling=theorem]
\declaretheorem{definition}[style=definition, name=Definition, sibling=theorem]
[style=definition, name=Remark, sibling=theorem]
[style=definition, name=Example, sibling=theorem]
[style=definition, name=Open Problem, sibling=theorem]
\declaretheorem{conjecture}[style=definition, name=Conjecture, sibling=theorem]
\declaretheorem{proposition*}[name=Proposition, numbered=no]
\declaretheorem{theorem*}[name=Theorem, numbered=no]

\usepackage[backend=biber, style=alphabetic, sorting=nyt,
maxnames=20, maxalphanames=20, backref, url=true, doi=true]{biblatex}
\DeclareSourcemap{
  \maps[datatype=bibtex]{
    \map[overwrite]{
      \step[fieldsource=doi, final]
      \step[fieldset=url, null]
      \step[fieldset=eprint, null]
    }  
  }
}
\DeclareSourcemap{
  \maps[datatype=bibtex]{
    \map{
      \step[fieldset=issn, null]
    }
  }
}
\DefineBibliographyStrings{english}{%
  backrefpage = {$\uparrow$ p.},%
  backrefpages = {$\uparrow$ pp.}%
}
\defbibheading{bibliography}[\refname]{\section*{#1}}

\title{Sparse $k$-AP Covering Sets and the Arithmetic Kakeya Conjecture}

\author{Pitchayut Saengrungkongka}
\address{
  Department of Mathematics,
  Harvard University,
  Cambridge,
  MA 02138,
  USA
}
\email{saengrun@math.harvard.edu}

\begin{document}
\begin{abstract}
A subset $A\subseteq \NN_0$ is $k$-AP covering
if there exists a constant $n_0$ such that 
for every integer $x>n_0$, there exists $d\in\NN_0$ such that 
 $x-d, x-2d,\dots,x-(k-1)d$ are all in $A$.
Disproving a conjecture of Kiss, S\'andor, and Yang,
we prove that for every integer $k\geq 6$, there exists a constant 
$\eps=\eps_k>0$ and a $k$-AP covering set 
$A$ such that 
$|A\cap \{0,1,\dots,n\}| <  n^{\frac{k-2}{k-1}-\eps}$
for all sufficiently large $n$.
We also relate this problem to the Arithmetic 
Kakeya Conjecture by Katz and Tao.
\end{abstract}
\maketitle

\section{Introduction}

\begin{definition}
A subset $A\subseteq \NN_0$ is \vocab{$k$-AP covering}
if there exists $n_0$ such that for all $x>n_0$,
there exists $d\in\NN_0$ such that 
 $x-d, x-2d,\dots,x-(k-1)d$ are all in $A$.
\end{definition}
We are interested in studying how sparse a $k$-AP covering set can be. 
This study was initiated by 
Kiss, S\'andor, and Yang \cite{generalized} 
in an effort to obtain a bound for generalized Stanley sequences.
More specifically, suppose that $A_0=\{a_1 < a_2 < \dots < a_t\}$
is a finite set of nonnegative integers containing no nonconstant arithmetic 
progression of length $k$, and for each $i>t$,
we let $a_i$ be the smallest integer $m>a_{i-1}$ such that 
$\{a_1,\dots,a_{i-1},m\}$ does not contain any nonconstant 
arithmetic progression of length $k$.
The sequence $a_1,a_2,\dots$ is called the \vocab{generalized 
Stanley sequence} $S_k(A_0)$, and it is easy to see that 
the set of terms in a generalized Stanley sequence
$\{a_1,a_2,\dots\}$ is $k$-AP covering.
Thus, a bound on $k$-AP covering sets would imply 
a bound on generalized Stanley sequences.

In the case of $k=3$, an easy counting argument proves 
the following proposition. 
\begin{proposition}[{\cite[p.~2]{generalized}}]
For any $3$-AP covering set $A\subseteq\NN_0$
and $\eps>0$, we have
$$|A\cap \{0,1,\dots,n\}| > (\sqrt 2-\eps)\sqrt n$$
for all sufficiently large integers $n$.
\end{proposition}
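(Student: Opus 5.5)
Fix a $3$-AP covering set $A$ with threshold $n_0$. For $n$ large put $A_n = A\cap\{0,1,\dots,n\}$ and $m = |A_n|$. The plan is to double count: each $x$ in the window $(n_0,n]$ must be written as $x = 2a-b$ with $a,b\in A_n$, and we bound how many $x$ such pairs can produce.

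\textbf{Reduction to pairs.} For $k=3$ the covering condition says that for each $x>n_0$ there is $d\in\NN_0$ with $x-d$ and $x-2d$ in $A$. Set $a = x-d$ and $b = x-2d$. Then $a,b\in A$, $x = 2a-b$, and $b\le a\le x$ because $d\ge 0$. If $x\le n$, then both $a$ and $b$ lie in $A_n$. So every integer $x\in(n_0,n]$ has the form $2a-b$ for some ordered pair $(a,b)\in A_n^2$ with $b\le a$.

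\textbf{Counting.} Pairs with $a=b$ (the case $d=0$) give $x=a\in A_n$, which accounts for at most $m$ values of $x$. Pairs with $b<a$ number at most $\binom{m}{2}$, and each produces exactly one value $2a-b$. Hence
$$n-n_0 \le m + \binom{m}{2} = \frac{m^2+m}{2}.$$
Solving this quadratic gives $m \ge \sqrt{2(n-n_0)} - 1$.

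\textbf{Conclusion.} Given $\eps>0$, we have $\sqrt{2(n-n_0)}-1 > (\sqrt2-\eps)\sqrt n$ for all sufficiently large $n$. This holds because $n_0$ is fixed, so $\sqrt{2(n-n_0)}/\sqrt n\to\sqrt2$ and $1/\sqrt n\to 0$. This gives the proposition.

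The only step needing care is the ordering $b\le a\le x$. It is what makes every pair lie inside $A_n$, and it is why we count unordered pairs $\binom m2$ rather than all $m^2$ ordered pairs. Without it the constant would come out as $1$ instead of $\sqrt2$. I do not expect any step to be a real obstacle.
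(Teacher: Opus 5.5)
Your proof is correct and is the ``easy counting argument'' the paper alludes to; the paper does not reproduce it and instead cites Kiss, S\'andor, and Yang. The argument writes every $x\in(n_0,n]$ as $2a-b$ with $b\le a$ both in $A\cap\{0,\dots,n\}$, so $n-n_0\le m+\binom{m}{2}$ and hence $m>\sqrt{2(n-n_0)}-1$.
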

In particular, this proposition implies that the 
$n$-th term of the sequence $S_3(A_0)$ is at most 
$\left(\tfrac 12+o(1)\right)n^2$.
This counting argument, in the specific setting of 
$S_3(A_0)$, appears in papers of Gerver and Ramsey 
\cite{quadratic_growth} and Moy \cite{quadratic_growth_2}.
This remains the best-known bound on the $n$-th element 
of $S_3(A_0)$ to date.
Determining the best possible upper bound 
on the $n$-th term of $S_3(A_0)$ is 
one of Erd\H os's open problems \cite{erdos_271}.

Kiss, S\'andor, and Yang proved that this bound for $3$-AP 
covering sets is tight up to a constant factor, 
and their construction was subsequently improved by 
Chen and Fang \cite{ap3_covering_1, ap3_covering_2}.
Kiss, S\'andor, and Yang also formulated the following conjecture 
for general $k$.
\begin{conjecture}[{\cite[Conj.~1]{generalized}}]
\label{conj:general_k}
There exists a constant $c_k>0$ such that 
for any $k$-AP covering set $A\subseteq\NN_0$, we have
$$|A\cap \{0,1,\dots,n\}| > c_k n^{\frac{k-2}{k-1}}$$
for infinitely many nonnegative integers $n$.
\end{conjecture}
Note that the exponent $\frac{k-2}{k-1}$ cannot be 
replaced by any larger exponent, as shown in 
\cite[Thm.~3]{generalized} by picking a random set $A$,
including a positive integer $n$ with probability 
$\operatorname{\Theta}\left((\log n\,/\,n)^{1/(k-1)}\right)$.
Thus, \Cref{conj:general_k} is saying that 
this random construction yields an optimal exponent.
If \Cref{conj:general_k} is true, we would get that 
the $n$-th term of the generalized Stanley sequence 
$S_k(A_0)$ is at most $O\left(n^{\frac{k-1}{k-2}}\right)$.

Our result (proven in \Cref{sec:fourier})
disproves \Cref{conj:general_k} for every integer $k\geq 6$, 
leaving only the case $k\in\{4,5\}$ open.

\begin{restatable}{theorem}{fourier}
\label{thm:fourier}
For every integer $k\geq 6$, 
there exists a constant $\eps=\eps_k>0$ 
and a $k$-AP covering set 
$A$ such that 
$$|A\cap \{0,1,\dots,n\}| <  n^{\frac{k-2}{k-1}-\eps}$$
for all sufficiently large positive integers $n$.
\end{restatable}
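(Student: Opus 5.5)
The plan is to reduce \Cref{thm:fourier} to a \emph{finite} covering problem and then lift a single finite example to $\NN_0$ by a digit-wise product construction.

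\textbf{Step 1 (reduction to a finite gadget).} Call $D\subseteq\{0,1,\dots,m-1\}$ a \emph{$k$-covering digit set with slack} if for every $x\in\{0,\dots,m-1\}$ there is an integer $d\ge 0$ such that $x-jd\in D$ for $j=1,\dots,k-1$, computed in $\ZZ$ with no wrap-around. One may instead allow a controlled wrap-around that is absorbed by a fixed carry pattern. Given such a $D$, define $A$ to be the set of integers whose base-$m$ digits all lie in $D$, possibly after a fixed digit-wise shift. For $x$ with digits $x_i$, choose $d_i$ for each digit and set $d=\sum d_i m^i$. Since every $x_i-jd_i$ is a digit, there are no carries, so $x-jd\in A$. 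The finitely many small $x$, and the need to shift so that all digits are in range, are handled by allowing $n_0$ and a bounded number of exceptional leading digits. Then $|A\cap[0,n]|\ll n^{\log|D|/\log m}$. It therefore suffices to find $m$ and $D$ with $\log|D|/\log m<\frac{k-2}{k-1}$.

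\textbf{Step 2 (the finite construction).} I would look for $D$ in a high-dimensional model such as $\FF_p^n$ or $\ZZ^n$ restricted to a box, encoded into an interval by a base-$M$ map with $M\gg k p$ so that no carries occur. The first candidate is a level-set construction
\[
S=\{y: (Q_1(y),\dots,Q_r(y))\in T^r\},
\]
where the $Q_i$ are quadratic forms and $T\subseteq\FF_p$. Along an AP, $Q_i(x-jd)=Q_i(x)-2jB_i(x,d)+j^2Q_i(d)$ is a quadratic polynomial in $j$. Covering thus reduces to two problems. The first is a one-dimensional problem: every $a\in\FF_p$ should start a quadratic sequence $a+\beta j+\gamma j^2$ lying in $T$ for $j=1,\dots,k-1$. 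The second is a solvability problem: choose $d$ so that the pairs $(B_i(x,d),Q_i(d))$ hit prescribed values. The second problem should be solvable by Fourier/character-sum counting on $\FF_p^n$, since the number of $d$ realizing given values is $p^{n-2r}(1+o(1))$ when $r<n/2$. This is where the Fourier analysis of the paper's section title enters: verifying that the required $d$ exists for \emph{every} $x$, using Gauss-sum bounds.

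\textbf{Main obstacle.} The heuristic count for this first attempt gives exactly the exponent $\frac{k-2}{k-1}$ and no saving. With a random $T$ of size $p^{1-2/(k-1)}$ and $r\approx n/2$ forms, the density of $S$ is $p^{-2r/(k-1)}$. The gain must come from the polynomial structure beating randomness. That structure is a single $d$ with $k-1$ conditions that depend polynomially on $j$, so that only $O(1)$ independent constraints arise, rather than $k-1$, per coordinate of the image. Concretely, I would try to replace quadratics by forms of degree $s$ with $s+1<k-1$. I would also choose $T$ structured, for instance a set closed under the relevant polynomial sequences, such as a subgroup-like or Kakeya-type set in $\FF_p$ of size $p^{1-c}$. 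The constraint $k\ge6$ should emerge as the condition under which the parameter count (free coefficients of $d$ against $k-1$ membership conditions) leaves a strictly positive saving $\eps_k$. I expect establishing that saving, that is, exhibiting the structured $T$ or forms and proving the covering by a character-sum estimate uniform in $x$, to be the crux. The lifting in Step 1 is routine by comparison.
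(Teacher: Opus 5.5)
Your Step 1 is exactly the paper's last step: take digits in $B$, choose $d$ digitwise, and note there are no carries. The reduction to a finite $B\subseteq\{0,\dots,N-1\}$ with $\log|B|/\log N<\frac{k-2}{k-1}$ is correct.

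The gap is Step 2, which is the whole content of the theorem. By your own count, the quadratic level-set construction gives no saving over random. The proposed fixes (higher-degree forms, a structured $T$, $k\ge 6$ ``emerging'' from a parameter count) come with no construction and no estimate. The model also does not plug into Step 1 as stated. The lifting needs honest integer progressions $x-jd$ with $d\ge 0$ and no wrap-around, for every $x$, including small $x$, which admit few $d$. Progressions in $\mathbb{F}_p^n$ wrap around in each coordinate.

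The missing idea is that, because of Step 1, the gadget needs no power saving: a constant-factor saving $|B|<\frac12 N^{\frac{k-2}{k-1}}$ already suffices. A uniformly random $B$ misses even this only because covering \emph{every} $a$ costs a $\log N$ factor in the union bound. The paper wins that factor back by non-uniform sampling:

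\begin{itemize}
\item Take $N=10k^2Q^2$ and split $[0,N)$ into blocks of length $10k^2Q$. Put all of block $0$ into $B$.
\item Include $y$ in block $n\ge 1$ with probability $\frac{1}{100k^2}(nQ)^{-1/(k-1)}g(y\bmod Q)$. Here $g:\mathbb{Z}/Q\mathbb{Z}\to(0,\infty)$ has mean $1$ but satisfies $\mathbb{E}_x\bigl[\prod_{j=1}^{k-1}g(a-jx)\bigr]>50^{k^2}\log Q$ for \emph{every} $a$, i.e.\ a weight with no unpopular common difference.
\item For $a$ in block $n$, the progressions with $x\in[knQ,(k+1)nQ]$ are genuine integer progressions and pairwise disjoint, so the events are independent. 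This range covers $n$ full periods mod $Q$, so the expected number of successes is $\gg\mathbb{E}_x\bigl[\prod_j g(a-jx)\bigr]$ and the failure probability is $<Q^{-100}$.
\end{itemize}

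The weight starts from $f_p(x)=1+0.2(\omega^x+\omega^{-x})+0.1(\omega^{2x}+\omega^{-2x})$ on $\mathbb{Z}/p\mathbb{Z}$.
\begin{itemize}
\item The only nontrivial solutions of $3\varepsilon_1+4\varepsilon_2+5\varepsilon_3=0$ with $\varepsilon_i\in\{0,\pm1,\pm2\}$ are $\pm(1,-2,1)$ and $\pm(2,1,-2)$. These give $\mathbb{E}_x[f_p(a-3x)f_p(a-4x)f_p(a-5x)]=1.008+0.002(\omega^a+\omega^{-a})>1.001$ uniformly in $a$.
\item A CRT product over $n$ primes amplifies this to $1.001^n\gg\log Q$.
\item Setting $g=(1+f)/2$ handles the remaining indices $j$ at a cost of only $2^{-(k-1)}$.
\end{itemize}
That is where $k\ge 6$ comes from: one needs $\{3,4,5\}\subseteq\{1,\dots,k-1\}$, not a parameter count.
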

The proof uses a construction based on Fourier analysis 
on $\ZZ/N\ZZ$, where $N$ is a product of several prime numbers.
This construction was partially inspired by the construction of
sets in which a linear pattern has no popular common difference
due to Sah, Sawhney, and Zhao in \cite{popular}.
In our case, we want to construct a set having no 
\emph{unpopular} common difference instead.

We also give a different construction 
in \Cref{sec:arithmetic} that yields a substantially 
stronger exponent for all sufficiently large $k$.

\begin{restatable}{theorem}{arithmetic}
\label{thm:arithmetic}
There exists a constant $c>0$ such that for all
sufficiently large $k$, there exists a $k$-AP covering set 
$A$ such that 
$$|A\cap \{0,1,\dots,n\}| <  n^{1-\frac{c}{\log\log k}}$$
for all sufficiently large positive integers $n$.
\end{restatable}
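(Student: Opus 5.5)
The plan is to split the construction into a \emph{finite, modular} covering problem and a \emph{lifting} step that turns a family of modular covering sets into a genuine $k$-AP covering subset of $\NN_0$. The modular notion is this. A set $S\subseteq \ZZ/N\ZZ$ is \emph{$k$-covering} if for every $x\in\ZZ/N\ZZ$ there is some $d$ with $x-jd\in S$ for all $1\le j\le k-1$. The key algebraic observation is that this property tensorizes under the Chinese remainder theorem. Suppose $N=p_1\cdots p_r$ and $S_i\subseteq \ZZ/p_i\ZZ$ is $k$-covering for each $i$. Then $S=\prod_i S_i$ is $k$-covering in $\ZZ/N\ZZ$, because the witnesses $d_i$ chosen separately in each coordinate glue by CRT into a single common difference $d$. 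Consequently the density of $S$ is $\prod_i |S_i|/p_i$, and the whole problem reduces to finding, for each prime $p$ in a suitable range, a $k$-covering set $S_p\subseteq\ZZ/p\ZZ$ with density at most $p^{-\delta}$, where $\delta\gg 1/\log\log k$.

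\textbf{Step 1 (single prime).} I will take $p$ to be a prime of moderate size compared with $k$, for instance $p\le k^{C}$ or $p\approx \exp(\log k/\log\log k)$. The exact range will be tuned to optimize the exponent. Since $d=0$ is useless for $x\notin S_p$, each such $x$ needs a nonconstant $(k-1)$-AP inside $S_p$ ending just before $x$. The counting bound $|S_p|^2\gtrsim p$ shows what the best possible density is. I will build $S_p$ as a union of dilates and translates of a long interval, or of a subgroup-like structured set, so that many residues are reached by the same short progression, and then check the covering property directly. The loss of $\log\log k$ in the final exponent should come exactly from the trade-off between $p$ and $k$ here. Honestly, I expect this step to be the main obstacle: the density saving per prime has to be a genuine power of $p$, not just a constant, and the structured choice of $S_p$ is where I am least sure of the details.

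\textbf{Step 2 (lifting to $\NN_0$).} A periodic set has positive density, so we cannot simply take the preimage of $S$. Instead I will use a multiscale construction. Choose moduli $N_1\mid N_2\mid\cdots$, each $N_t$ a product of more and more primes from Step 1. For integers $n$ in a dyadic-type block $[X_t,X_{t+1})$, put $n\in A$ iff $n \bmod N_t\in S^{(t)}$, and add a sparse ``low scale'' piece that handles the witnesses. For $x$ in the block, the modular witness $d$ is only determined modulo $N_t$. I will choose $N_t\le x^{1-\eta}$ so that a representative $d\in[1,x/k)$ exists. This keeps every point $x-jd$ nonnegative and inside blocks whose own modulus still sees the correct residues; this is arranged by making $N_{t-1}\mid N_t$ and keeping consecutive blocks comparable in size. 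The density of $A$ up to $n$ is then about $n\prod_{p\mid N_t}(|S_p|/p)\le n\,N_t^{-\delta}$. Taking $N_t$ a fixed power of $X_t$ gives $|A\cap[0,n]|\le n^{1-c/\log\log k}$.

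Finally, I will verify the boundary cases: that $x-jd$ landing in an earlier block still has the right residue modulo that block's (smaller) modulus, which holds by the divisibility chain, and that the finitely many small $x$ are absorbed into $n_0$.
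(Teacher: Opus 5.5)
There is a genuine gap, and it is where you suspected. Step 1 carries the entire content of the theorem, and a CRT product $\prod_p S_p$ of per-prime covering sets cannot produce the required saving.

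First, for every prime $p\le k-1$, a $k$-covering set $S_p\subseteq\ZZ/p\ZZ$ must be all of $\ZZ/p\ZZ$. The index $j=p$ lies in $\{1,\dots,k-1\}$, so $x-pd\equiv x\in S_p$ for every $x$. Your suggested range $p\approx\exp(\log k/\log\log k)$ therefore gives no saving at all. More generally, $|S_p|\ge k-1$ whenever $S_p\ne\ZZ/p\ZZ$.

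Second, for primes $p\ge k$ you need $|S_p|/p\le p^{-\delta}$ with $\delta\gg 1/\log\log k$. If each factor only has density bounded below by a constant, a product over primes $p\ge k$ has density $N^{-O(1/\log k)}$. You give no construction, and the natural candidates fail:
\begin{itemize}
\item A single interval of density below $1/4$ contains only $(k-1)$-APs whose common difference is $O(p/k)$ modulo $p$, so it covers barely more than itself.
\item Random or pseudorandom sets, multiplicative subgroups included, heuristically have about $p\alpha^{k-1}$ witnesses per $x$. They stall at $\delta\approx 1/(k-1)$, the same exponent as the random construction in $\NN_0$.
\end{itemize}
In fact a uniform per-prime statement of that strength is essentially equivalent to the theorem itself (reduce $A\cap[0,p)$ modulo $p$). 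So Step 1 is a restatement, not a lemma.

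Third, your multiscale lifting needs infinitely many distinct primes to build $N_1\mid N_2\mid\cdots$, so a bounded range such as $p\le k^C$ cannot supply it. The lifting itself is the easy part, and the paper uses a single modulus. If $S\subseteq\ZZ/Q\ZZ$ is covering with witnesses $d\in[0,Q)$, then $[0,kQ)\cup\{y<Q^2 : y\bmod Q\in S\}$ covers $[0,Q^2)$ by genuine nonnegative APs, and base-$Q^2$ digits finish.

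The missing idea is that the sparse covering set modulo $Q$ should be a \emph{union over $j$ of product sets}, not a product over primes of covering sets. The paper takes $Q=p_1\cdots p_m$, the first $m=\lceil 100\log k\rceil$ odd primes, all far smaller than $k$, with the single witness $d(x)=x^2\bmod Q$.
\begin{itemize}
\item For fixed $j$, $x-jx^2\equiv\tfrac1{4j}-j\bigl(x-\tfrac1{2j}\bigr)^2\pmod{p_i}$ takes at most $(p_i+1)/2$ values whenever $p_i\nmid j$.
\item Hence the residues of $x-jd(x)$ modulo $Q$ lie in a product set of density at most $2^{-(m-\omega(j))}\prod_i(1+1/p_i)\le k^{-4}$.
\item Taking the union over the $k-1$ values of $j$ costs only a factor $k$.
\end{itemize}
The projection of this union to each $\ZZ/p_i\ZZ$ is everything, consistent with the obstruction above: the saving comes entirely from correlations across primes. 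Finally, $\log Q=\Theta(\log k\log\log k)$, so density $k^{-3}$ equals $Q^{-\Theta(1/\log\log k)}$. That, not a trade-off between $p$ and $k$ at a single prime, is where the $\log\log k$ comes from.
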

The proof of this result (given in \Cref{sec:arithmetic}) 
is a close modification of a construction 
due to Green and Ruzsa \cite[Thm.~1.2]{arithmetic_kakeya}
for a related problem arising from the Arithmetic Kakeya 
Conjecture. 
We discuss the connection with the Arithmetic Kakeya 
Conjecture in the following subsection.

\subsection{Relation to Arithmetic Kakeya Conjecture}
\label{subsec:relation}
The problem of determining how sparse
a $k$-AP covering set can be is closely related to 
the Arithmetic Kakeya Conjecture,
which was proposed by Katz and Tao 
\cite{katz_tao} in an effort to resolve Kakeya's conjecture 
using an approach from additive combinatorics.

Katz and Tao's original formulation of the conjecture is as follows.
For any subset $B\subseteq \ZZ^2$ and $r\in\QQ\cup\{\infty\}$, define 
$$\pi_r(B) = \begin{cases}
\{x+ry : (x,y)\in B\} &  \text{ if } r\neq\infty \\
\{y : (x,y)\in B\} & \text{ if } r=\infty.
\end{cases}$$
We then have the following conjecture.
\begin{conjecture}[Arithmetic Kakeya Conjecture]
\label{conj:arithmetic_kakeya}
For any $\eps>0$, there exists a positive integer $\ell$ and 
$r_1,\dots,r_\ell\in\QQ\cup\{\infty\}$, none equal to $-1$, 
such that 
$$|\pi_{-1}(B)| \leq \max_{i=1}^\ell |\pi_{r_i}(B)|^{1+\eps}$$
for all finite subsets $B\subseteq \ZZ^2$.
\end{conjecture}
See \cite{needles} for an explanation of how this conjecture 
is related to the original Kakeya Conjecture.

Green and Ruzsa \cite[Thm.~1.1]{arithmetic_kakeya} give several 
equivalent formulations of the Arithmetic Kakeya Conjecture.
The one given above is \cite[Conj.~3]{arithmetic_kakeya}.
Another formulation worth mentioning is \cite[Conj.~1]{arithmetic_kakeya}, which 
states that for any $\eps>0$, there exists $k_0$ such that 
for any $k > k_0$, there exists 
a constant $c_k>0$ such that if $S$ is a set that contains,
for every $d\in\{1,2,\dots,n\}$, 
an arithmetic progression of length $k$ and common 
difference $d$,
then we have $|S| > c_kn^{1-\eps}$.

By adapting the argument from \cite[\S 2]{arithmetic_kakeya}
(specifically, in the proof that Conjecture 2 implies 
Conjecture 1'), we show (in \Cref{sec:kakeya}) 
that the Arithmetic Kakeya Conjecture implies a bound on AP-covering sets.
\begin{restatable}{theorem}{kakeya}
\label{thm:arithmetic_kakeya}
Assume that \Cref{conj:arithmetic_kakeya} is true.
Then for any $\eps>0$, there exists $k_0$ such that 
for any $k>k_0$, if $A$ is a $k$-AP covering set,
there exists $n_0$ (possibly depending on $A$) such that
$$|A\cap \{0,1,\dots,n\}| > n^ {1-\eps}$$
for all $n>n_0$.
\end{restatable}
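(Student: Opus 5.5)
The plan is to follow the Green–Ruzsa argument: from a $k$-AP covering set $A$ we build a planar set $B\subseteq\ZZ^2$ such that one projection has size about $n$, while each of the finitely many projections in \Cref{conj:arithmetic_kakeya} lands inside $A\cap\{0,\dots,n\}$.

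First I will reduce to linear forms. For a finite $B\subseteq\ZZ^2$ and a rational direction, $|\pi_r(B)|$ is the number of values taken on $B$ by a primitive integral linear form $\lambda_r(u,v)$: namely $u+rv$ up to scaling, or $v$ when $r=\infty$. Now let $T\in GL_2(\QQ)$, scaled so that it maps $\ZZ^2$ into $\ZZ^2$. Then $|\pi_r(TB)|$ equals $|\lambda_r\circ T(B)|$, and $\lambda_r\circ T$ is a nonzero form whose kernel direction is the image of $r$ under the Möbius transformation induced by $T$.

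Fix $\eps>0$. Let $\eps'>0$ be small, and let $r_1,\dots,r_\ell$ be the directions given by the conjecture for $\eps'$. I will choose $T$ so that:
\begin{itemize}
\item the form attached to $-1$ becomes $(y,d)\mapsto y$ (up to a scalar);
\item each $r_i$ is sent to a form proportional to $(y,d)\mapsto y-s_i d$ with $s_i\in\QQ_{>0}$.
\end{itemize}
This is possible because a Möbius transformation can send $-1$ to $\infty$ and can be chosen so that the finitely many points $r_i\neq-1$ land at positive finite values. For instance, compose $z\mapsto 1/(z+1)$ with a translation and a reflection as needed, avoiding the finitely many bad choices. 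The hypothesis $r_i\neq -1$ is exactly what makes the $s_i$ finite.

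Let $Q$ be a common denominator of the $s_i$, and put $j_i=Qs_i\in\ZZ_{>0}$. Set $k_0=\max_i j_i$. Here I expect the main subtlety: the dependence on arbitrary rational directions is absorbed by dilating the common difference by $Q$, so that each relevant form evaluates to $y-j_i d$ with $1\le j_i\le k-1$ whenever $k>k_0$.

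Now take $k>k_0$ and a $k$-AP covering set $A$ with threshold $n_0$. For each $y\in(n/2,n]$ with $y>n_0$, pick $d_y\in\NN_0$ with $y-jd_y\in A$ for all $1\le j\le k-1$. Define
$$B_0=\{(y,d_y)\},$$
and let $B$ be its image under $T^{-1}$, scaled by a fixed integer so that it lies in $\ZZ^2$. Then:
\begin{itemize}
\item $|\pi_{-1}(B)|=|\{y\}|\ge n/2-n_0$.
\item For each $i$, $|\pi_{r_i}(B)|=|\{y-j_id_y\}|\le|A\cap\{0,\dots,n\}|$, since $0\le y-j_id_y\le y\le n$ and $y-j_id_y\in A$.
\end{itemize}

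Applying \Cref{conj:arithmetic_kakeya} gives
$$n/2-n_0\le |A\cap\{0,\dots,n\}|^{1+\eps'},$$
hence $|A\cap\{0,\dots,n\}|\ge (n/3)^{1/(1+\eps')}>n^{1-\eps}$ for all large $n$, once $\eps'$ is chosen with $1/(1+\eps')>1-\eps$. The one step requiring care is checking that the scalings and the change of variables $T$ preserve projection cardinalities exactly. This holds because each projection becomes an injective affine reparametrization of the values of the corresponding linear form on $B_0$.
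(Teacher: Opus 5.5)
Your proposal is correct and is essentially the paper's argument. The paper splits your single change of variables into two steps: a M\"obius normalization fixing $-1$ that moves all $r_i$ into $(-1,\infty)$, followed by the explicit set $B=\{(x-L\,d(x),-L\,d(x))\}$, on which $\pi_{-1}$ returns $x$ and $\pi_{r_i}$ returns $x-L(r_i+1)d(x)$, so $L$ plays the role of your $Q$ and $L(r_i+1)$ the role of your $j_i$. (One small slip: as written, $B_0=\{(y,d_y)\}$ with forms $y-s_id$ gives $y-s_id_y$ rather than $y-j_id_y$, so you should either take $B_0=\{(y,Qd_y)\}$ or fold the dilation into $T$, as you evidently intend.)
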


We expect that the converse of \Cref{thm:arithmetic_kakeya} 
can be obtained by following the remainder of the argument in 
\cite[\S 2]{arithmetic_kakeya}. We do not need this converse here.
This result suggests that the problem of determining the 
optimal exponent of the sparsest $k$-AP covering sets is difficult.

\subsection{Notation}

We let $\NN$ be the set of positive integers, 
$\NN_0$ be the set of nonnegative integers,
and $\mathbb Z/N\mathbb Z$ denote the ring of integers modulo $N$.
A \vocab{$k$-AP} is an arithmetic progression of length $k$, 
which may be constant (i.e., all terms equal)
unless otherwise stated.

For any function $f : \mathbb Z/N\mathbb Z\to \mathbb R$,
we let $\mathbb E_x[f(x)]$ denote the expected value 
of $f(x)$, where $x$ is chosen uniformly at 
random from $\mathbb Z/N\mathbb Z$.

We use the standard asymptotic notation
($O$, $\Omega$, and $\Theta$) throughout this paper.

\section{Fourier-Analytic Construction: 
Proof of \texorpdfstring{\Cref{thm:fourier}}{Theorem \ref{thm:fourier}}}
\label{sec:fourier}
We prove \Cref{thm:fourier}.
Throughout this section, we fix an integer $k\geq 6$.
Before delving into the details, let us briefly outline 
the whole proof.
The proof of \Cref{thm:fourier} proceeds in five steps.
\begin{enumerate}
\item \label{item:fourier} 
First, we construct a function $f_p : \ZZ/p\ZZ\to (0,\infty)$
such that $\mathbb E_x[f_p(x)]=1$
and $\mathbb E_x[f_p(a-3x)f_p(a-4x)f_p(a-5x)] > 1.001$
for all $a\in\ZZ/p\ZZ$ using a Fourier-analytic construction.
\item \label{item:crt}
Using the Chinese remainder theorem, we construct 
a function $f : \ZZ/Q\ZZ \to (0,\infty)$
(where $Q$ is a product of several primes)
such that $\mathbb E_x[f(x)]=1$
but 
$$\mathbb E_x[f(a-3x)f(a-4x)f(a-5x)] > 100^{k^2} \log Q,$$
giving a $\log Q$-factor and a constant factor gain.
The $\log Q$ factor is necessary 
for the step (\ref{item:sampling}) to work.
\item \label{item:smoothing}
By modifying $f$, we construct a function 
$g : \ZZ/Q\ZZ \to (0,\infty)$ such that 
$\mathbb E_x[g(x)]=1$ and 
$$\mathbb E_x\left[\prod_{j=1}^{k-1} g(a-jx)\right] 
> 50^{k^2}\log Q.$$
It is worth noting that we are sacrificing
some constant factor gain from step (2)
in order to control other elements 
$a-jx$ for $j\notin \{3,4,5\}$.
This does not affect our construction since the 
gain in step (\ref{item:crt}) can be made arbitrarily large.
\item \label{item:sampling}
By carefully sampling according to the distribution determined by $g$,
we construct a subset $B\subseteq \{0,1,\dots,N-1\}$ such that 
$|B| < \frac 12 N^{\frac{k-2}{k-1}}$
and for any $a\in \{0,1,\dots,N-1\}$, there exists $x$ such that 
$a-x$, $a-2x$, \dots, $a-(k-1)x$ are all in $B$.
\item \label{item:tensor}
Finally, by the tensor power trick, we 
construct the desired $k$-AP covering set $A$.
\end{enumerate}

Below, each step will be presented as a lemma,
and the formal statements can be found there.
The following lemma handles step (\ref{item:fourier}).
\begin{lemma}
\label{lem:fourier}
For every prime $p>100$, there exists a function $f_p  :\ZZ/p\ZZ\to \mathbb R$ such that
\begin{enumerate}[label=(\alph*)]
    \item $0<f_p(x) < 2$ for all $x\in\ZZ/p\ZZ$.
    \item $\mathbb E_x[f_p(x)] = 1$.
    \item For any $a\in \mathbb Z/p\mathbb Z$, we have 
    $$\mathbb E_x[f_p(a-3x)f_p(a-4x)f_p(a-5x)] 
    > 1.001.$$
\end{enumerate}
\end{lemma}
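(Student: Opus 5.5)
The plan is to take $f_p$ to be a small real trigonometric perturbation of the constant function $1$, namely
$$f_p(x) = 1 + g(x), \qquad g(x) = 2\alpha\cos(2\pi x/p) + 2\beta\cos(4\pi x/p),$$
with constants $\alpha,\beta>0$ to be chosen (tentatively $\alpha = 0.35$, $\beta = 0.1$). In Fourier terms, $\hat g$ is supported on $\{\pm1,\pm2\}$, with $\hat g(\pm 1)=\alpha$ and $\hat g(\pm 2)=\beta$.

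Properties (a) and (b) are then immediate. We have $|g|\le 2(\alpha+\beta) = 0.9<1$, which gives $0<f_p<2$. Also $\mathbb E_x g = \hat g(0) = 0$, which gives $\mathbb E_x f_p = 1$.

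For (c), expand the product $(1+g(a-3x))(1+g(a-4x))(1+g(a-5x))$ and average over $x$.

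\textbf{Constant and linear terms.} The constant term contributes $1$. The linear terms vanish: since $p>5$, the map $x\mapsto a-jx$ is a bijection of $\ZZ/p\ZZ$, so each $\mathbb E_x g(a-jx) = 0$.

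\textbf{Quadratic terms.} For $i\neq j$ in $\{3,4,5\}$, Fourier expansion gives
$$\mathbb E_x[g(a-ix)g(a-jx)] = \sum_{ir+js\equiv 0} \hat g(r)\hat g(s)\, e\bigl((r+s)a/p\bigr).$$
A nonzero term would require $r,s\in\{\pm1,\pm2\}$ with $ir\equiv -js \pmod p$. Then $j/i \equiv \pm1,\pm2,\pm\tfrac12 \pmod p$, which would force one of finitely many small integer congruences such as $4\equiv\pm 6$ or $8\equiv\pm3$. None of these holds for $p>100$, so all quadratic terms vanish.

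\textbf{Cubic term.} We have
$$\mathbb E_x[g(a-3x)g(a-4x)g(a-5x)] = \sum_{3r+4s+5u\equiv 0}\hat g(r)\hat g(s)\hat g(u)\,e\bigl((r+s+u)a/p\bigr),$$
with $r,s,u\in\{\pm1,\pm2\}$. Here $|3r+4s+5u|\le 24<p$, so the congruence is an equality in $\ZZ$, and a short enumeration finds all solutions.
\begin{itemize}
\item The solutions $(1,-2,1)$ and $(-1,2,-1)$ have $r+s+u=0$. They contribute the $a$-independent amount $2\alpha^2\beta$.
\item The solutions $(2,1,-2)$ and $(-2,-1,2)$ have $r+s+u=\pm1$. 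They contribute $2\alpha\beta^2\cos(2\pi a/p)$.
\end{itemize}

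Combining the four parts, for every $a$,
$$\mathbb E_x[f_p(a-3x)f_p(a-4x)f_p(a-5x)] = 1 + 2\alpha^2\beta + 2\alpha\beta^2\cos(2\pi a/p) \ge 1 + 2\alpha\beta(\alpha-\beta).$$
With $\alpha=0.35$ and $\beta=0.1$ this is $1+0.0175>1.001$.

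The main point, and the only real obstacle, is the choice of frequencies. The Fourier support must be chosen so that the quadratic (pairwise) correlations vanish for every $a$, while the cubic correlation retains a positive $a$-independent diagonal term that dominates the $a$-dependent ones. The relation $(r,s,u)=(u,-2u,u)$, which solves both $3r+4s+5u=0$ and $r+s+u=0$, is what dictates using frequencies $\pm1$ and $\pm2$. The remaining work is the finite check above that no unwanted coincidences occur once $p>100$.
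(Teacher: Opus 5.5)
Your proposal is correct and follows the paper's proof. The paper also takes $f_p(x)=1+0.2(\omega^x+\omega^{-x})+0.1(\omega^{2x}+\omega^{-2x})$, i.e.\ Fourier support $\{0,\pm1,\pm2\}$, finds the same four nontrivial solutions $(1,-2,1),(-1,2,-1),(2,1,-2),(-2,-1,2)$ of $3\varepsilon_1+4\varepsilon_2+5\varepsilon_3=0$, and arrives at $1.008+0.002(\omega^a+\omega^{-a})$. The differences are cosmetic: you use $\alpha=0.35$ instead of $0.2$, which gives the larger margin $1+2\alpha\beta(\alpha-\beta)=1.0175$, and you group the expansion by degree rather than enumerating all of $\{0,\pm1,\pm2\}^3$ at once.
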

\begin{proof}
Let $\omega = \exp(2\pi i/p)$. 
We take the function $f_p$ of the form
$$f_p(x) = 1+0.2(\omega^x+\omega^{-x}) + 0.1(\omega^{2x}+\omega^{-2x}),$$
so that $0.4 \leq f_p(x)\leq 1.6$ for all $x$,
which means that (a) is satisfied.
Since $\mathbb E_x[\omega^{bx}]=0$ for all $b$ not divisible by $p$,
we get that $\mathbb E_x[f_p(x)]=1$, verifying (b).

Now, we have to verify (c).
To do this, we note that after fully expanding 
$f_p(a-3x)f_p(a-4x)f_p(a-5x)$, we get a linear combination of
$\omega^{\eps_1(a-3x) + \eps_2(a-4x) + \eps_3(a-5x)}$
for some $\eps_1,\eps_2,\eps_3\in\{0,1,-1,2,-2\}$.
If $3\eps_1 + 4\eps_2 + 5\eps_3 \neq 0$,
then after taking expected value, 
we get a multiple of $\mathbb E_x[\omega^{bx}]$
for some integer $b$ with $|b|<100$, 
so such terms vanish.
It can be directly checked that
the only combinations $(\eps_1,\eps_2,\eps_3)$
that make $3\eps_1+4\eps_2+5\eps_3=0$ are 
$$(\eps_1,\eps_2,\eps_3) \in \{(0,0,0), 
(1,-2,1), (-1,2,-1), (2,1,-2), (-2,-1,2)\}.$$
The combinations $(1,-2,1)$ and $(-1,2,-1)$
each contribute $(0.2)^2 (0.1)$ to the sum.
The combinations 
$(2,1,-2)$ and $(-2,-1,2)$ contribute
$(0.2)(0.1)^2 \omega^a$ and $(0.2)(0.1)^2 \omega^{-a}$
to the sum, respectively.
Therefore, we get that 
$$\mathbb E_x[f_p(a-3x)f_p(a-4x)f_p(a-5x)]
= 1.008 + 0.002(\omega^a + \omega^{-a}) 
> 1.001,$$
as desired.
\end{proof}

Next, we handle step (\ref{item:crt})
by using the Chinese remainder theorem to stitch together 
$f_p$ for several primes $p$.
\begin{lemma}
\label{lem:crt}
There are infinitely many positive integers $Q$ 
for each of which there exists 
a function $f : \ZZ/Q\ZZ \to (0,\infty)$ such that 
\begin{enumerate}[label=(\alph*)]
\item $0<f(x) < Q^{\frac 1{100k}}$ for all $x\in \ZZ/Q\ZZ$.
\item $\mathbb E_x[f(x)] = 1$.
\item for any $a\in \ZZ/Q\ZZ$, we have
$$\mathbb E_x[f(a-3x)f(a-4x)f(a-5x)] > 100^{k^2}\log Q.$$
\end{enumerate}
\end{lemma}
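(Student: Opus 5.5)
Take $Q=p_1p_2\cdots p_m$ to be a product of $m$ distinct primes, each larger than $100$, and define $f$ through the Chinese remainder theorem by
$$f(x)=\prod_{i=1}^m f_{p_i}(x \bmod p_i),$$
where $f_{p_i}$ is the function from \Cref{lem:fourier}. Then $f>0$. The CRT isomorphism $\ZZ/Q\ZZ\cong\prod_i \ZZ/p_i\ZZ$ sends a uniform $x$ to independent uniform coordinates $x\bmod p_i$, so $\mathbb E_x[f(x)]=\prod_i \mathbb E[f_{p_i}]=1$, which gives (b). For (c), the map $x\mapsto (a-3x,a-4x,a-5x)$ also factors coordinatewise. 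Hence
$$\mathbb E_x[f(a-3x)f(a-4x)f(a-5x)]=\prod_{i=1}^m \mathbb E_{x_i}\bigl[f_{p_i}(a_i-3x_i)f_{p_i}(a_i-4x_i)f_{p_i}(a_i-5x_i)\bigr]>1.001^m,$$
where $a_i=a\bmod p_i$, by \Cref{lem:fourier}(c). For (a), we have $f(x)<2^m$.

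It remains to choose $m$ and the primes so that
$$2^m\le Q^{1/(100k)} \qquad\text{and}\qquad 1.001^m>100^{k^2}\log Q.$$
Since $k$ is fixed, this only requires the primes to be neither too small nor too large compared with $m$. Take the $p_i$ to be $m$ consecutive primes in a window such as $(M,2M]$ with $M=2^{100k}$. For this fixed $M$, such primes exist for every $m$ up to the number of primes in the window. Then $Q>M^m=2^{100km}$, which gives (a). Also $\log Q\le m\log(2M)=O(km)$, so the right side of the second inequality is $100^{k^2}\cdot O(km)$, while the left side $1.001^m$ grows exponentially in $m$.

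The catch is that exponential growth in $m$ requires $m$ to be large, of order $k^2$ times a constant, together with a logarithmic correction. A window with $M$ fixed has only about $M/\log M$ primes. Since $M=2^{100k}$, this is exponential in $k$, far more than the $O(k^2)$ primes we need, so it suffices. To get infinitely many $Q$, let $m$ range over all sufficiently large values, or let $M$ grow while keeping $m$ in the range where both inequalities hold. Choosing $m$ and $M$ this way is the main (though mild) obstacle: the bound $2^m\le Q^{1/(100k)}$ forces the primes to be large, and then $\log Q$ must still be beaten by $1.001^m$.

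Concretely, I would take $M=2^{100k}$, let $m$ be any integer with $C k^2\le m\le \pi(2M)-\pi(M)$ for a suitable absolute constant $C$, and take $Q$ to be the product of the first $m$ primes exceeding $M$. Then $\log Q\le 2m\log(2M)$. For $m\ge Ck^2$ the inequality $1.001^m>100^{k^2}\cdot 2m\cdot 101k$ holds: the left side is $e^{\Omega(m)}$, the right side is $e^{O(k^2)+O(\log m)+O(\log k)}$, and we take $C$ large. Prime number theorem estimates (or Bertrand-type bounds) show that the range for $m$ is nonempty and contains many integers. To obtain infinitely many $Q$, I would instead let $M$ range over all large values, each giving its own $Q$; condition (a) only improves as the primes grow, while the bound $\log Q\le 2m\log(2M)$ forces $m$ to grow like $\log\log M$, which is still available.
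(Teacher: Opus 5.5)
Your proposal is correct and follows the paper's proof. Both take $f=\prod f_{p_i}$ over primes exceeding $2^{100k}$, use CRT independence for (b) and (c), and compare $1.001^m$ against $\log Q$. The difference is cosmetic: you confine the primes to a window $(M,2M]$ and vary $M$ to get infinitely many $Q$. The paper instead takes the first $n$ primes above $2^{100k}$, lets $n\to\infty$, and uses $\log Q=O(n\log n)$, which avoids the window bookkeeping.
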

\begin{proof}
Let $p_1,p_2,\dots,p_n$ be the first $n$ primes 
greater than $2^{100k}$,
where $n$ is sufficiently large (to be specified later).
Take $Q=p_1\cdots p_n$ and 
$$f(x) = f_{p_1}(x) \cdots f_{p_n}(x),$$
where $f_{p_i}$ are as in \Cref{lem:fourier}.
Since $f_{p_i}(x) < 2 < p_i^{\frac 1{100k}}$,
we get that $f(x) < Q^{\frac 1{100k}}$, verifying (a).

To verify (b), we note by the Chinese remainder theorem 
that if $x$ is uniformly chosen in $\ZZ/Q\ZZ$,
then the random variables $x\bmod p_1, \dots, x\bmod p_n$ 
are independent and uniformly distributed. This implies that
$$\mathbb E_x[f(x)] = \mathbb E_x[f_{p_1}(x)] 
\cdots \mathbb E_x[f_{p_n}(x)] = 1,$$
verifying (b).

Finally, to verify (c), we note by the same 
Chinese remainder theorem argument that 
\begin{align*}
    \mathbb E_x[f(a-3x)f(a-4x)f(a-5x)]
    &= \prod_{i=1}^n \mathbb E_x[f_{p_i}(a-3x)f_{p_i}(a-4x)
    f_{p_i}(a-5x)] \\
    &> 1.001^n.
\end{align*}
By a variant of the prime number theorem,
the product of the first $n$ primes greater than $2^{100k}$
is $\exp(O(n\log n))$.
Thus, we get that $\log Q = O(n\log n)$,
which is less than $100^{-k^2} 1.001^n$ for all sufficiently large $n$.
Thus, $1.001^n > 100^{k^2}\log Q$ if $n$ is sufficiently large,
implying existence of such $Q$.
\end{proof}
We now handle step (\ref{item:smoothing}) by smoothing the function $f$,
yielding a function $g$ that controls $a-jx$ 
for all $j\in\{1,2,\dots,k-1\}$
(at the cost of losing a constant factor from the bound,
but the loss is offset by the amplification in 
step (\ref{item:crt})).
The reason we need $k\geq 6$ is because the terms $a-3x$, $a-4x$,
and $a-5x$ inside $\{a-x,a-2x,\dots,a-(k-1)x\}$ are necessary 
in order to use the construction in step (\ref{item:fourier}).
One cannot find a suitable construction in 
\Cref{lem:fourier} if we only have 
the terms $a-x$, $a-2x$, $a-3x$, and $a-4x$.
\begin{lemma}
\label{lem:smoothing}
For each $Q$ in \Cref{lem:crt},
there exists a function $g : \ZZ/Q\ZZ\to (0,\infty)$
such that 
\begin{enumerate}[label=(\alph*)]
\item $0<g(x) < Q^{\frac 1{100k}}$ for all $x\in \ZZ/Q\ZZ$.
\item $\mathbb E_x[g(x)] = 1$.
\item for any $a\in \ZZ/Q\ZZ$, we have
$$\mathbb E_x\left[\prod_{j=1}^{k-1} g(a-jx)\right] > 50^{k^2}\log Q.$$
\end{enumerate}
\end{lemma}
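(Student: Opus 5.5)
Take $g = \tfrac12(1+f)$, with $f$ the function from \Cref{lem:crt}; this mixes $f$ with the uniform density. Properties (a) and (b) are then immediate. Since $f>0$ we have $g>0$. Since $f<Q^{\frac1{100k}}$ and $Q^{\frac1{100k}}>1$, we get $g<\tfrac12\bigl(1+Q^{\frac1{100k}}\bigr)<Q^{\frac1{100k}}$. Finally $\mathbb E_x[g(x)]=\tfrac12(1+1)=1$.

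For (c), fix $a$ and use only the pointwise lower bound $g\ge \tfrac12 f$ for $j\in\{3,4,5\}$ and $g\ge \tfrac12$ for every other $j\in\{1,\dots,k-1\}$. All factors are positive, so
$$\prod_{j=1}^{k-1} g(a-jx)\ \ge\ 2^{-(k-1)}\, f(a-3x)f(a-4x)f(a-5x).$$
Here we use $k\ge 6$, so that $3,4,5\le k-1$. Taking expectations and applying \Cref{lem:crt}(c) gives
$$\mathbb E_x\Bigl[\prod_{j=1}^{k-1} g(a-jx)\Bigr]\ \ge\ 2^{-(k-1)}\cdot 100^{k^2}\log Q.$$
It remains to check that $2^{-(k-1)}100^{k^2}\ge 50^{k^2}$. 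This is equivalent to $2^{k^2}\ge 2^{k-1}$, which holds for every $k\ge1$. The strict inequality is inherited from \Cref{lem:crt}(c).

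There is no real obstacle here. The only point needing care is recognising that the uniform lower bound $g\ge\tfrac12$ controls the uncontrolled indices $j\notin\{3,4,5\}$. This is exactly why mixing with the constant $1$ is used rather than working with $f$ itself. With $f$ alone, $f(a-jx)$ could be tiny on the relevant $x$, since the product over $j$ has no independence across $j$ that we could exploit. The constant loss $2^{-(k-1)}$ is absorbed by the slack $100^{k^2}$ versus $50^{k^2}$ built into \Cref{lem:crt}.
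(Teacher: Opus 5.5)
Your proof is correct and matches the paper's argument: you use the same choice $g=\tfrac12(1+f)$, the same pointwise bounds ($g\ge \tfrac12 f$ for $j\in\{3,4,5\}$ and $g\ge\tfrac12$ otherwise), and you absorb the $2^{-(k-1)}$ loss into the gap between $100^{k^2}$ and $50^{k^2}$ in the same way. You are slightly more explicit than the paper in checking (a) and in noting that $k\ge 6$ is used exactly to guarantee $3,4,5\le k-1$.
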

\begin{proof}
We take $g(x) = \frac{1+f(x)}2$,
where $f$ is as in \Cref{lem:crt},
which clearly satisfies (a) and (b).
To verify (c), we use the bounds $g(a-jx) \geq \frac{f(a-jx)}2$
for $j\in\{3,4,5\}$ and $g(a-jx) \geq \frac 12$
for $j\notin\{3,4,5\}$, giving
\begin{align*}
\mathbb E_x\left[\prod_{j=1}^{k-1} g(a-jx)\right]
&\geq \mathbb E_x\left[\frac 1{2^{k-1}} f(a-3x)f(a-4x)f(a-5x)\right] \\
&\geq \frac 1{2^{k-1}}100^{k^2}\log Q 
> 50^{k^2}\log Q, \qedhere
\end{align*}
since $k\geq 6$.
\end{proof}

We now sample from $g$ to get a subset of integers.
\begin{lemma}
\label{lem:sampling}
There exist a positive integer $N$ 
and a set $B\subseteq \{0,1,\dots,N-1\}$
such that 
\begin{enumerate}[label=(\alph*)]
\item $|B| < \frac 12 N^{\frac{k-2}{k-1}}$ and 
\item for any $a\in \{0,1,\dots,N-1\}$, there exists 
a nonnegative integer $d$ such that
$a-d$, $a-2d$, \dots, $a-(k-1)d$ are all in $B$.
\end{enumerate}
\end{lemma}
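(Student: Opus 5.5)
We will take $N=Q$ for a suitable $Q$ from \Cref{lem:crt}, with $Q$ large, and let $g$ be the function given by \Cref{lem:smoothing}. We first build a random subset $B'\subseteq\ZZ/Q\ZZ$ in which each residue $y$ is included independently with probability
$$q_y=\rho\, g(y), \qquad \rho=\tfrac14 Q^{-\frac1{k-1}}.$$
By \Cref{lem:smoothing}(a), $q_y<\tfrac14 Q^{-\frac1{k-1}+\frac1{100k}}<1$, so these are legitimate probabilities. By \Cref{lem:smoothing}(b), $\mathbb E|B'|=\rho Q=\tfrac14 Q^{\frac{k-2}{k-1}}$. A Chernoff bound then gives $|B'|<\tfrac12 Q^{\frac{k-2}{k-1}}$ with probability $1-o(1)$. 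Afterwards we identify $\ZZ/Q\ZZ$ with $\{0,\dots,Q-1\}$, and we must deal with the fact that progressions are taken modulo $Q$.

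For covering, fix $a$ and let $X_a$ be the number of nonzero $x\in\ZZ/Q\ZZ$ with $a-jx\in B'$ for all $1\le j\le k-1$. We restrict to $x$ for which the points $a-jx$ are distinct mod $Q$. Since every prime factor of $Q$ exceeds $2^{100k}>k$, this holds for every $x\neq0$. Then
$$\mathbb E X_a=\rho^{k-1}\sum_{x\ne0}\prod_j g(a-jx)\ge \rho^{k-1}\Bigl(Q\cdot 50^{k^2}\log Q - Q^{\frac{k-1}{100k}}\Bigr)\gtrsim 4^{-(k-1)}50^{k^2}\log Q,$$
using \Cref{lem:smoothing}(c), since the $x=0$ term is tiny. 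So $\mathbb E X_a\ge C\log Q$ with $C$ huge.

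To show $X_a>0$ with probability $1-o(1/Q)$, we apply Janson's inequality. The main obstacle is the $\Delta$ term, which sums over pairs $x\neq x'$ whose progressions share a point. A pair $(x,x')$ with $a-jx=a-j'x'$ is determined by $x$ and the pair $(j,j')$, with up to $k^2$ choices. We split by the number $s$ of shared points. For $s=1$, the contribution is roughly $\sum_x\sum_{(j,j')}\rho^{2k-3}\prod g\prod g/g(\text{shared})$. Bounding the extra $g$-factors crudely by $Q^{(k-2)/(100k)}$ each, this is at most $k^2\rho^{k-2}Q^{k/(100k)}\,\mathbb E X_a$, so $\Delta\le \mathbb E X_a\cdot Q^{-\frac{k-2}{k-1}+\frac{1}{100}+o(1)}\cdot k^2=o(\mathbb E X_a)$. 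This is where the bound $g<Q^{1/(100k)}$ is essential. Larger overlaps are rarer and lose even more powers of $\rho$. Janson then gives $\Pr[X_a=0]\le\exp(-\mathbb E X_a(1-o(1)))\le Q^{-C/2}$. A union bound over all $a$, together with the size event, yields a good $B'$.

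It remains to pass from progressions mod $Q$ to genuine integer progressions with $d\ge0$. We take $N=kQ$ and put $B=\{b+mQ: b\in B',\ 0\le m<k\}\cap[0,N)$. This loses only a factor $k$ in size, which we can recover by enlarging the constants: replace $\rho$ by $\rho/(4k)$, using the $50^{k^2}$ slack to absorb the extra $(4k)^{k-1}$. For $a\in[0,N)$, we choose $x\in[0,Q)$ covering $a\bmod Q$ and set $d=x$. Then each $a-jd$ lies in $(a-kQ,a]$, but it may be negative. To avoid this, we instead cover $a$ only for $a\ge kQ$ in a larger range: take $N=k^2Q$, let $B$ consist of the lifts of $B'$ in $[0,N)$, and cover small $a$ trivially. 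Tuning these parameters so that $|B|<\tfrac12N^{\frac{k-2}{k-1}}$ still holds is routine bookkeeping, since the constants are absorbed by $50^{k^2}$ and by $\rho$.
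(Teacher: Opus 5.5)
Your analysis on $\ZZ/Q\ZZ$ is sound. Janson is a legitimate alternative to the paper's device of restricting $d$ to $[knQ,(k+1)nQ]$, where the progressions for distinct $d$ are disjoint and independence is exact. One correction there: larger overlaps cost \emph{fewer} extra factors of $\rho$, not more. What actually controls $\Delta$ is that $S_x\neq S_{x'}$ for $x\neq x'$ (e.g.\ because $\binom k2$ is invertible mod $Q$). Hence each of the at most $k^2$ overlapping partners $x'$ costs at least one factor $\rho Q^{1/(100k)}<1$.

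The genuine gap is the passage to integers, which is not routine bookkeeping. With $N=k^2Q$, covering $a<kQ$ ``trivially'' (with $d=0$) means putting $[0,kQ)$ into $B$. That is $kQ$ elements, while the budget is $\frac12(k^2Q)^{\frac{k-2}{k-1}}=o(Q)$. Your method offers no alternative for such $a$. The admissible differences satisfy $d\le a/(k-1)$, which for $a<(k-1)Q$ is less than a full period mod $Q$, so \Cref{lem:smoothing}(c) says nothing. Indeed, for $a\le Q^{0.9}$ the expected number of good $d$ is $o(1)$ even using $g<Q^{1/(100k)}$. Enlarging $N$ does not rescue periodic lifts either. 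Making the initial segment negligible forces $N\gg Q^{\frac{k-1}{k-2}}$. But membership of $a-jd$ in a set of lifts depends only on $d\bmod Q$, so extra periods of $d$ give nothing new and $\rho$ cannot be lowered beyond the constant slack. Then $|B|\asymp N\rho$ exceeds $N^{\frac{k-2}{k-1}}$ by the factor $(N/Q)^{\frac1{k-1}}\gg Q^{\frac1{(k-1)(k-2)}}$, which is unbounded, far beyond the $50^{k^2}$ slack.

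The missing idea is fresh, position-dependent randomness, so that larger $a$ can use many periods of $d$ at lower density. The paper takes $N=10k^2Q^2$. It puts the first block $[0,10k^2Q)$ entirely into $B$; this is affordable since $10k^2Q\ll Q^{2\frac{k-2}{k-1}}$ for $k\ge 4$. Each $x$ in the $n$-th block of length $10k^2Q$ is included independently with probability $\frac1{100k^2}(nQ)^{-\frac1{k-1}}g(x)$. For $a$ in block $n\ge1$ it uses $d\in[knQ,(k+1)nQ]$. These $\asymp nQ$ differences run over $n$ full periods mod $Q$, so the $50^{k^2}\log Q$ gain applies, and their number exactly compensates the density $(nQ)^{-\frac1{k-1}}$ raised to the power $k-1$. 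Meanwhile the expected size $\sum_n 10k^2Q\cdot\frac1{100k^2}(nQ)^{-\frac1{k-1}}\asymp Q^{2\frac{k-2}{k-1}}$ stays within $\frac12N^{\frac{k-2}{k-1}}$.
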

\begin{proof}
We take $Q$ and $g$ as in \Cref{lem:smoothing}.
Let $N=10k^2Q^2$. 
We construct the set $B$ as follows:
\begin{itemize}
    \item include each $x\in [0,10k^2Q)$;
    \item include each $x\in [10k^2Q, 20k^2Q)$ 
    with probability $\frac 1{100k^2} Q^{-\frac 1{k-1}}g(x)$;
    \item include each $x\in [20k^2Q, 30k^2Q)$
    with probability $\frac 1{100k^2} (2Q)^{-\frac 1{k-1}} g(x)$;
    \item include each $x\in [30k^2Q, 40k^2Q)$
    with probability $\frac 1{100k^2} (3Q)^{-\frac 1{k-1}} g(x)$;
    \item \hspace{2cm} \vdots 
    \item include each $x\in [10(Q-1)k^2Q, 10k^2Q^2)$
    with probability $\frac 1{100k^2} ((Q-1)Q)^{-\frac 1{k-1}} g(x)$.
\end{itemize}
In particular, we partition $\{0,1,\dots,N-1\}$ 
into $Q$ blocks, each of which has length $10k^2Q$.
Note that 
\begin{align*}
\mathbb E[|B|] &= 10k^2Q + 
\Big(1+2^{-\frac 1{k-1}}+3^{-\frac 1{k-1}} + \dots + (Q-1)^{-\frac 1{k-1}}\Big)
\cdot \left(\frac 1{100k^2}Q^{-\frac 1{k-1}}\right) \sum_{x=0}^{10k^2Q-1} g(x)\\
&< 20k^2Q  + \left(\frac{k-1}{k-2}\cdot  Q^{\frac{k-2}{k-1}}\right)
\cdot \left(\frac 1{100k^2}  Q^{-\frac 1{k-1}}\right) \cdot 
\left(10k^2Q \cdot \mathbb E_x[g(x)]\right) \\
&< 20k^2 Q + \frac 15 Q^{2\cdot \frac{k-2}{k-1}} \\
&< \frac 14 Q^{2\cdot \frac{k-2}{k-1}},
\end{align*}
if $Q$ is sufficiently large.
Thus, by Markov's inequality, we deduce that 
$$\mathbb P\left(|B| > \tfrac 12 N^{\frac{k-2}{k-1}}\right) < 0.1.$$

Next, we look at the second condition.
Let $a\in\{0,1,\dots,N-1\}$, and take $n$ such that
$a\in [10k^2nQ, 10k^2(n+1)Q)$.
If $n=0$, then automatically we have $a\in B$,
so we may pick $d=0$.

Henceforth, we assume that $n\geq 1$.
We commit to picking $x$ from the interval $[knQ, (k+1)nQ]$. 
We claim that for any distinct $x_1, x_2\in [knQ, (k+1)nQ]$,
the sets $\{a-x_1, a-2x_1,\dots,a-(k-1)x_1\}$
and $\{a-x_2,a-2x_2,\dots,a-(k-1)x_2\}$ are disjoint.
Suppose that they are not disjoint. Then there exists 
$u,v\in\{1,2,\dots,k-1\}$ such that 
$ux_1 = vx_2$, so $\tfrac{x_1}{x_2} = \tfrac uv$.
However, $\tfrac{x_1}{x_2}$ is between $\tfrac k{k+1}$
and $\tfrac{k+1}k$, which cannot be equal to 
$\tfrac uv$ for $u,v\in\{1,2,\dots,k-1\}$
(unless $u=v$).
Therefore, $u=v$. From $ux_1=vx_2$, we then conclude that 
$x_1=x_2$, which is a contradiction.

Thus, we deduce that the events of the form 
$\{a-x, a-2x,\dots,a-(k-1)x\} \subseteq B$ 
are independent across all $x$.
Furthermore, since $a-jx < 10k^2(n+1)Q$
(so it is in at most the $(n+1)$-th block)
and the prefactors assigned to each block is decreasing,
we get that the probability that $a-jx\in B$ is at least 
$\frac 1{100k^2}(nQ)^{-\frac{1}{k-1}}g(a-jx)$. 

Therefore, the probability that $a$ does not 
satisfy the second condition is
\begin{align*}
\mathbb P(a\text{ fails}) 
&= \prod_{x=knQ}^{(k+1)nQ} \Big(1 - \prod_{j=1}^{k-1} 
\mathbb P[a-jx\in B]\Big) \\
&\leq \prod_{x=knQ}^{(k+1)nQ} \Big(1 - \prod_{j=1}^{k-1} 
\left(\frac 1{100k^2}\cdot 
(nQ)^{-\frac 1{k-1}} g(a-jx)\right)\Big) \\
&= \prod_{x=knQ}^{(k+1)nQ} \Big(1 - 
\frac 1{(100k^2)^{k-1} nQ} \prod_{j=1}^{k-1} g(a-jx)\Big) \\
&< \prod_{x=knQ}^{(k+1)nQ} \Big(1 - 
\frac 1{10^{k^2} nQ} \prod_{j=1}^{k-1} g(a-jx)\Big) \\
&\leq \prod_{x=knQ}^{(k+1)nQ}  \exp\left(-\frac 1{10^{k^2}nQ}
\prod_{j=1}^{k-1} g(a-jx) \right) \tag{using $\exp(-x)\geq 1-x$}\\
&\leq \exp\left(-\frac 1{10^{k^2}nQ} \sum_{x=knQ}^{(k+1)nQ}
\prod_{j=1}^{k-1} g(a-jx) \right)\\
&\leq \exp\Big(-\frac 1{10^{k^2}} 
\mathbb E_x \left[\prod_{j=1}^{k-1} g(a-jx)\right]\Big) \\
&\leq \exp\Big(-100\log Q\Big) < \frac 1{Q^{100}},
\end{align*}
so summing across all $a\in \{0,1,\dots,N-1\}$ 
and using the union bound gives 
that the probability that the second condition fails is at most $\frac 12$.
Thus, by the union bound, there is a nonzero probability
that $B$ satisfies both conditions.
Therefore, such a $B$ exists.
\end{proof}

Finally, we prove \Cref{thm:fourier},
which we reproduce the statement below.
\fourier*
\begin{proof}[Proof of \Cref{thm:fourier}]
Take $N$ and $B$ as in \Cref{lem:sampling}.
We let 
$$A = \left\{\sum_{i=0}^m e_i N^i 
: m\geq 0 \text{ and }e_0,\dots,e_m\in B\right\}$$
be the set of nonnegative integers whose base-$N$
representations have all digits in $B$.

We first show that $A$ is $k$-AP covering.
Indeed, given $x=\sum_{i=0}^m x_iN^i$ where 
$x_i\in \{0,1,\dots,N-1\}$,
for each $i$, we take $d_i$ such that $x_i-jd_i\in B$
for all $j\in\{1,2,\dots,k-1\}$.
Then we may take $d=\sum_{i=0}^m d_iN^i$ to get that 
$x-jd = \sum_{i=0}^m (x_i-jd_i)N^i\in A$
for all $j\in\{1,2,\dots,k-1\}$.

We now bound the size of $A$.
Note that 
$$|A\cap \{0,1,\dots,n\}| 
\leq |B|^{\lfloor\log_N n\rfloor+1} 
\leq |B| \cdot n^{\frac{\log|B|}{\log N}},$$
and since $\frac{\log |B|}{\log N} 
< \frac{k-2}{k-1}$, it would suffice to pick 
any $\eps>0$ such that $\frac{\log |B|}{\log N} 
< \frac{k-2}{k-1}-\frac{\eps}2$
and take $n$ sufficiently large so that $|B| < n^{\eps/2}$.
\end{proof}
\section{Arithmetic Construction: Proof of \texorpdfstring{\Cref{thm:arithmetic}}{Theorem \ref{thm:arithmetic}}}
\label{sec:arithmetic}
In this section, we prove \Cref{thm:arithmetic}.
The proof is very similar to 
Green and Ruzsa's construction in \cite[Thm.~1.2]{arithmetic_kakeya}.
In particular, we start by constructing a subset 
that satisfies the condition for $\{0,1,\dots,N-1\}$.
This set is a union of sets of the form 
$\{x-d(x), x-2d(x),\dots,x-(k-1)d(x)\}$,
where $d(x)=x^2\bmod Q$ and $Q$ is a product of several primes.
This is so that each $x-j\,d(x)$ is confined 
in a small number of residue classes modulo $Q$,
which makes $B$ small enough to meet the required bound.

Finally, we use the base-$N$ representation 
construction (similar to the last step of 
the proof of \Cref{thm:fourier}) 
to get the desired AP-covering set.

The following lemma constructs such a subset of 
$\{0,1,\dots,N-1\}$.
\begin{lemma}
\label{lem:arithmetic}
There exists a constant $c>0$ 
such that for all sufficiently large $k$,
there exists a positive integer $Q$, $N$ and a subset 
$B\subseteq \{0,1,\dots,N-1\}$ such that 
$$|B| < N^{1-\frac{c}{\log\log k}}$$
and for every $x\in\{0,1,\dots,N-1\}$,
there exists a nonnegative integer $d$ such that 
$x-d$, $x-2d$, \dots, $x-(k-1)d$ are all in $B$.
\end{lemma}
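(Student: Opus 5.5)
We construct $B$ along the lines of Green and Ruzsa. Let $Q=p_1p_2\cdots p_m$ be a product of $m$ distinct primes, each larger than $k$, with $m$ and the sizes of the $p_i$ chosen later. For $x\in\{0,1,\dots,N-1\}$ set $d(x)=r(x)+Q\,t(x)$, where $r(x)\equiv x^2 \pmod Q$ is the least nonnegative residue and $t(x)\ge 0$ is a correction term used only to control the archimedean size of $x-jd(x)$. Define
\[
B=\bigcup_{x}\{x-jd(x):1\le j\le k-1\}\cap[0,N),
\]
and add a short initial segment $[0,x_0)$ to handle the small $x$; these $x$ take $d=0$. The covering property then holds by construction, provided every $x-jd(x)$ lies in $[0,N)$. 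For this we take $N$ to be a large multiple of $kQ$ and $x\ge kQ$; then $d=r(x)<Q$ gives $x-jd\ge 0$ automatically, so $t\equiv 0$ should suffice. All the work goes into bounding $|B|$.

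The key step is to count residues. Modulo each $p_i$, the element $x-jx^2$ lies in the image of the quadratic polynomial $y\mapsto y-jy^2$, which takes exactly $(p_i+1)/2$ values when $p_i\nmid j$. This alone saves only a factor $2$ per prime, independent of $j$, and the union over the $k-1$ values of $j$ destroys the saving. So we follow Green and Ruzsa more closely. For each prime $p_i$, restrict to the $j$ in a fixed residue class, or use $d(x)=x^2$ together with the structure of the set
\[
S_p=\bigcup_{j=1}^{k-1}\{y-jy^2: y\in\mathbb{Z}/p\mathbb{Z}\}.
\]
This set has size $(1-\delta_p)p$ only if the images for different $j$ overlap a lot. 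The images are the complements of half-lines of quadratic residues: $y-jy^2=z$ is solvable iff $1-4jz$ is a square. Hence
\[
S_p=\{z: 1-4jz \text{ is a square for some } j\le k-1\},
\]
and $z$ is missing from $S_p$ iff all of $1-4z,1-8z,\dots,1-4(k-1)z$ are nonsquares. By Weil-type character sum bounds, this happens for about $2^{-(k-1)}p$ values of $z$, which is far too few.

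This is the main obstacle. Taking primes with $p\approx k$ or $p<k$ does not help, since the quadratic structure is then too crude. Green and Ruzsa instead group $j$ by their prime factors and choose primes $p$ where $j$ is nearly constant. The plan is therefore to pick the primes $p_i$ to be the primes up to about $\log k$ (giving the $\log\log k$), and to use $d(x)\equiv x^2$ only at the primes $p$ that do not divide the relevant $j$. For $j$ divisible by $p$, we have $x-jd\equiv x$, which gives no saving, but a product over the primes not dividing $j$ of the densities $(p+1)/(2p)$ gives a total saving $\prod(1/2+\cdots)$ per $j$. Summing over $j\le k-1$ with the multiplicative structure of $j$ yields density $\exp(-c\,m)$ with $m\sim\log k/\log\log k$ primes.

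Finally, choose $N=Q^{C}$ with $C$ a constant, so that the initial segment and the archimedean losses are negligible. Then $|B|\le kN\cdot\mathrm{density}+x_0$, and this is below $N^{1-c/\log\log k}$ once $\log Q\approx m\log\log k$ is compared with the saving $cm$. The delicate part, which we expect to require the most care, is the residue-density computation in the previous paragraph: proving that the union over all $j$ still has density $Q^{-c/\log\log k}$.
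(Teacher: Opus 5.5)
There is a genuine gap, and it begins with the claim in your second paragraph that the union over the $k-1$ values of $j$ ``destroys the saving.'' Let $R_j\subseteq\mathbb{Z}/Q\mathbb{Z}$ be the set of residues of $x-jx^2$. By the Chinese remainder theorem $R_j$ is a product set, so
\[
|R_j|\le\prod_{p_i\nmid j}\frac{p_i+1}{2}\prod_{p_i\mid j}p_i\le\frac{Q}{2^{\,m-\omega(j)}}\prod_{i=1}^m\Bigl(1+\frac1{p_i}\Bigr).
\]
So the factor $2$ per prime compounds over all $m$ primes. The union should then be handled by the trivial bound $|\bigcup_jR_j|\le\sum_j|R_j|\le k\max_j|R_j|$, which costs only a single factor of $k$.

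Your set $S_p$ instead amounts to bounding $\bigcup_jR_j$ by the product over $p$ of its projections $\bigcup_j(R_j\bmod p)$. That is a huge overcount, and it is why things looked hopeless. Even your first setup, with all $p_i>k$, works with the plain union bound. Taking $m=\lceil10\log k\rceil$ such primes gives residue density $O(k\,2^{-m})\le k^{-5}$, and hence $|B|<N^{1-c/\log k}$ for some $c>0$. That is a weaker exponent, but a valid construction.

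Your final parameter choice does not work: the primes up to about $\log k$, so $m\sim\log k/\log\log k$ and $\log Q\approx\log k$. Every $j$ has $|R_j|\ge Q\,2^{-m}=Q\,k^{-o(1)}$, so $\sum_{j<k}|R_j|\ge k^{1-o(1)}Q$. The summation over $j$ that you defer to ``the delicate part'' therefore yields nothing. You give no mechanism forcing the $R_j$ to overlap, and in this regime none should be expected. Whenever $Q\le k-1$, the single value $j=Q$ gives $x-jd(x)\equiv x\pmod Q$, so every residue class is hit. Here $Q=k^{1+o(1)}$ sits right at that threshold.

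The missing idea is the opposite scaling: take $m$ to be a large multiple of $\log k$. The paper does this with the first $m=\lceil100\log k\rceil$ odd primes. Then $\omega(j)=O(\log k/\log\log k)\le m/2$, and $\prod_i(1+1/p_i)=O(\log m)$ by Mertens. Hence $|R_j|\le Q/k^4$ for every $j$, and $|\bigcup_jR_j|\le Q/k^3$. With $N=Q^2$ and the initial segment $[0,kQ)$, this gives $|B|\le kQ+Q^2/k^3$.

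The $\log\log k$ then comes not from the size of the primes. It comes from $\log Q=\Theta(m\log m)=\Theta(\log k\,\log\log k)$, so that $k^3=Q^{\Theta(1/\log\log k)}$.
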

\begin{proof}
Let $m=\lceil 100\log k\rceil$, $3=p_1<p_2<\dots$
be the odd primes, and $Q=p_1p_2\cdots p_m$.
We take $N=Q^2$ and
$$B = \{0,1,\dots,kQ-1\}\cup \bigcup_{x=kQ}^{Q^2-1} \{x-d(x), x-2d(x),\dots, 
x-(k-1)d(x)\}$$
where $d(x)=x^2\bmod Q$ is the unique integer $0\leq r < Q$
such that $x^2\equiv r\pmod Q$.

Clearly, this satisfies the second condition,
so it suffices to bound the size of the set.
To bound the size of the union, 
we note that for all $j$ and prime $p_i$
for $1\leq i\leq m$ such that $p_i\nmid j$,
we have that 
$$x-j\,d(x) \equiv x-jx^2 \equiv \frac{1}{4j} 
- j\left(x-\frac 1{2j}\right)^2\pmod{p_i}.$$
Thus, if $p_i\nmid j$, then the values of $x-j\,d(x)$
lie in an affine image of the quadratic residues modulo $p_i$,
so $x-j\,d(x)$ takes at most $\frac{p_i+1}2$ 
distinct values modulo $p_i$.
Therefore, modulo $Q$, $x-j\,d(x)$ takes at most 
\begin{align*}
\prod_{\substack{i\in\{1,2,\dots,m\} \\ 
p_i\nmid j}}\frac{p_i+1}2 
\cdot \prod_{\substack{i\in\{1,2,\dots,m\} \\ 
p_i\mid j}} p_i 
\ \leq\ \frac{Q}{2^{m-\omega(j)}} 
\prod_{i=1}^m \left(1+\frac 1{p_i}\right)
\ \leq\ \frac Q{2^{m/2}} \cdot k
\ \leq\ \frac Q{k^4}
\end{align*}
distinct values.
Here, we use the bounds $\omega(j) = O\left(\frac{\log k}{\log\log k}\right) \ll m$
and $\prod_{i=1}^m \left(1+\frac 1{p_i}\right) 
= O(\log m) \ll k$.

Thus, we conclude that $x-j\,d(x)$ takes
at most $\frac Q{k^4}$ distinct residues modulo $Q$.
Taking the union over all $j\in\{1,2,\dots,k-1\}$
gives that the union in the definition of $B$
takes at most $\frac Q{k^3}$ distinct residues modulo $Q$.
Since $0\leq x-jd(x) < Q^2$ for all $kQ\leq x<Q^2$ 
and $1\leq j\leq k-1$, we get that every element of $B$ is 
in $[0, Q^2-1]$.
Each residue modulo $Q$ has exactly $Q$ numbers 
in the interval $[0,Q^2-1]$,
so there are at most $Q\cdot \frac Q{k^3}=\frac{Q^2}{k^3}$
possible elements in the union defining $B$. Thus,
$$|B| \leq kQ + \frac{Q^2}{k^3}.$$
Next, we note that by the prime number theorem,
$Q = \exp(\Theta(m\log m))$.
Thus, $Q = k^{\Theta(\log\log k)}$.
In particular, $Q>k^4$ for all sufficiently
large $k$, so 
$$|B| \leq \frac{2Q^2}{k^3} < 
Q^{2-\frac{2c}{\log\log k}} = N^{1-\frac{c}{\log\log k}}$$
for some constant $c>0$,
implying that $B$ satisfies the first condition.
\end{proof}

We now prove \Cref{thm:arithmetic},
which we reproduce the statement below.
\arithmetic*
\begin{proof}[Proof of \Cref{thm:arithmetic}]
Take $N$ and $B$ as in \Cref{lem:arithmetic}.
We let 
$$A = \left\{\sum_{i=0}^m e_i N^i 
: m\geq 0 \text{ and }e_0,\dots,e_m\in B\right\}$$
be the set of nonnegative integers whose base-$N$
representations have all digits in $B$.
By a similar argument as in the proof of \Cref{thm:fourier},
we get that $A$ is $k$-AP covering.
Note that 
$$|A\cap \{0,1,\dots,n\}| 
\leq |B|^{\left\lfloor\log_N n\right\rfloor+1} 
< |B|\cdot  n^{1-\frac c{\log\log k}},$$
which is less than $n^{1-\frac{c'}{\log\log k}}$
for any $c'<c$ and sufficiently large $n$.
This proves the theorem after renaming $c'$ as $c$.
\end{proof}

\section{Relation to Arithmetic Kakeya Conjecture:
Proof of \texorpdfstring{\Cref{thm:arithmetic_kakeya}}{Theorem \ref{thm:arithmetic_kakeya}}}
\label{sec:kakeya}
In this section, we prove \Cref{thm:arithmetic_kakeya},
which we reproduce the statement below.

\kakeya*

\begin{proof}
Fix $\delta>0$ to be selected later.
We take $r_1,\dots,r_\ell \in \QQ\cup\{\infty\}$ 
satisfying \Cref{conj:arithmetic_kakeya}, 
that is, the inequality $|\pi_{-1}(B)| \leq \max_{i=1}^\ell 
|\pi_{r_i}(B)|^{1+\delta}$ holds for all finite subsets 
$B\subseteq \ZZ^2$.
Note that $r_i\neq -1$ for all $i$.

We first argue that we may assume $r_1,\dots,r_\ell>-1$
and $r_1,\dots,r_\ell \neq\infty$.
To do this, we note that for any M\"obius transformation 
$\phi(z) = \frac{az+b}{cz+d}$ such that $\phi(-1)=-1$,
applying linear transformation $\ttwomat dcba$
on $B$ allows us to replace $r_1,\dots,r_\ell$
with $\phi(r_1), \dots, \phi(r_\ell)$.
If elements in $B$ are now rational, we clear the denominator 
so that every element in $B$ is in $\mathbb Z^2$.
We now pick $\phi$ of the form $\phi(z) = \frac{z+1}{z-u}-1$
where $u>-1$ is a rational number such that $(-1,u)$ does not contain any 
of $r_1,\dots,r_\ell$.
This satisfies $\phi(r_i) > -1$
and $\phi(r_i)\neq\infty$ for all $1\leq i\leq\ell$.

Let $r_i = \frac{a_i}{b_i}$ for each $i$ where 
$a_i$ and $b_i$ are integers with $b_i>0$.
Let $L=\operatorname{lcm}(b_1,\dots,b_\ell)$ and
choose $k_0=L \max_{i=1}^\ell (r_i+1) + 2$.
These choices are so that $L(r_i+1)\in\{1,2,\dots,k-1\}$
for all $1\leq i\leq\ell$.
Let $k>k_0$ and $A$ be a $k$-AP covering set.
In particular, for any $x>n_0$, there exists $d(x)$
such that $x-d(x)$, $x-2d(x)$, $\dots$, $x-(k-1)d(x)$ 
are all in $A$.
We then define 
$$B = \{(x-L\,d(x), -L\,d(x)) : n_0<x\leq n\}.$$
Clearly $|\pi_{-1}(B)|\geq n-n_0$. 
The set $\pi_{r_i}(B)$ consists of elements of the form
$$x - L(r_i+1)\, d(x),$$
which is in $A$ because $L(r_i+1)\in \{1,2,\dots,k-1\}$
by our choice of $k$ and $L$.
Thus, we have that $|\pi_{r_i}(B)| \leq |A\cap\{0,1,\dots,n\}|$.
Therefore,
$$n-n_0 \leq |\pi_{-1}(B)| \leq 
\max_{i=1}^\ell |\pi_{r_i}(B)|^{1+\delta} \leq  
(|A\cap\{0,1,\dots,n\}|)^{1+\delta},$$
which implies that 
$$|A\cap\{0,1,\dots,n\}| \geq (n-n_0)^{1/(1+\delta)},$$
so picking any $\delta$ such that $\frac 1{1+\delta} > 1-\eps$ 
makes this quantity greater than $n^{1-\eps}$ 
for all sufficiently large $n$.
\end{proof}

\section*{Acknowledgment}
An earlier version of this paper was written as part of 
MIT Project Lab in Mathematics (18.821). 
The author thanks Pico Gilman, Matthew Halm, Noah Kravitz,
and Wei Zhang for helpful discussions and feedback on the paper.

All mathematical content in this paper is due to the author.
The author used ChatGPT and Claude to help identify typographical and mathematical errors.
\printbibliography
\end{document}